\numberwithin{equation}{section}
\theoremstyle{plain}
\newtheorem{thm}{Theorem}[section]
\newtheorem{prop}[thm]{Proposition}
\newtheorem{lem}[thm]{Lemma}
\theoremstyle{definition}
\theoremstyle{remark}
\newtheorem{rem}[thm]{Remark}
\newcommand\Leb{{\cal L}}
\newcommand{\R}{\mathbb{R}}
\newcommand\N{{\mathbb N}}
\let\eps\varepsilon
\newcommand\pref[1]{(\ref{#1})}
\DeclareMathOperator{\spt}{spt}
\DeclareMathOperator{\argmax}{argmax}
\def\<#1,#2>{\left<#1,#2\right>}
\title{On some systems controlled by the structure of their memory}
\author {G. Buttazzo  \thanks{\scriptsize Dipartimento di Matematica, Universit\`a di Pisa, Largo B. Pontecorvo, 5, 56127 Pisa, ITALY
{\texttt {buttazzo@dm.unipi.it}}}, G. Carlier, R. Tahraoui \thanks{\scriptsize CEREMADE, UMR CNRS 7534, Universit\'e Paris IX Dauphine, Pl. de Lattre de Tassigny, 75775 Paris Cedex 16, FRANCE
\texttt{carlier@ceremade.dauphine.fr}, \texttt{tahraoui@ceremade.dauphine.fr}}}
\begin{document}
\maketitle

\begin{abstract}
We consider an optimal control problem governed by an ODE with memory playing  the role of a control. We show the existence of an optimal solution and derive some necessary optimality conditions. Some examples  are then discussed.
\end{abstract}

\textbf{Keywords:} Optimal control problems, memory terms, Wasserstein distance

\textbf{2000 Mathematics Subject Classification:} 49K25, 34K35, 49K22, 93C30

\section{Introduction}\label{intro}

The present paper deals with the optimal control of equations of the form
\begin{equation}\label{state0}
\dot{x}(t)=\<f(.,x(.)),\nu_t>=\int_0^t f(s,x(s))\,d\nu_t(s),\ t\in[0,1],\ x(0)=x_0,
\end{equation}
where the control is a family of probability measures $t\mapsto \nu_t$ such that $\nu_t$ is supported by $[0,t]$ (nonanticipativity). 

The special case where $\nu_t=\delta_{\theta(t)}$ is a Dirac mass corresponds to  the deviated equation
\begin{equation}\label{statedelayed}
\dot{x}(t)=f\big(\theta(t),x(\theta(t))\big), \; x(0)=x_0,
\end{equation}
where the control is the deviated function $\theta$ satisfying $\theta(t)\leq t$ for all $t\in[0,1]$. We shall see that the optimal control of \pref{state0} is the natural relaxation of that of the deviated equation \pref{statedelayed}. 

We will consider the minimization of the functional
\begin{equation}\label{ocnu}
\int_0^1 j(t,x(t))\,dt + h(x(1)) + \int_0^1\Big(\int_0^1 g(t,s)\,d\nu_t(s)\Big)\,dt
\end{equation}
where $x$ is related to $\nu$ by the state equation \pref{state0}. An example of relevant function $g$ is $g(t,s)=\lambda|t-s|^p$ with $p\ge1$ and $\lambda\ge0$. In this case the last term in the previous functional is $\lambda\int_0^1 W_p^p(\delta_t,\nu_t)\,dt$ where $W_p(\delta_t, \nu_t)$ is the $p$-Wasserstein distance between $\nu_t$ and the Dirac measure at $t$
\[W_p^p(\delta_t,\nu_t)=\int_{[0,t]}|t-s|^p\,d\nu_t(s).\]
The interpretation of the Wasserstein term in the functional is therefore a penalization of long-term memory.

\smallskip

Dynamics with lags  as \pref{statedelayed} or with more general memory structure as \pref{state0} arise in many different settings in engineering, economics,  biology, modelling of financial time series... It is typically the case when studying the optimal performances of  a system in which the response to a given input occurs not instantaneously but only after a certain elapse of time. We refer for instance to the classical book of Bellman and Cooke \cite{bc} for a general overview of such functional equations. A natural motivation for the optimal control of the delayed equation \pref{statedelayed} where the deviation $\theta$ is the control is then as follows. Imagine some natural or industrial process is known to obey the delayed equation \pref{statedelayed} but with an \emph{unknown} delay function $\theta$ (think of the incubation period of some disease). If now some-possibly noisy-observation $x_0$ is available then the simplest way to \emph{estimate} $\theta$ is by \emph{least} squares, that is by minimizing
\[\int_0^1 \vert x(t)-x_0(t)\vert^2 dt\]
with respect to $\theta$, $x$ being linked to $\theta$ by \pref{statedelayed}.  As already mentioned, this problem needs to be relaxed in a suitable way, which, roughly speaking, amounts to replace \pref{statedelayed} by \pref{state0} (see section \ref{exist} for a precise statement). Another motivation for related problems in economics can be found for instance in \cite{jkt}. 

\smallskip

Now we claim that considering directly (i.e. without involving relaxation) the optimal control of \pref{state0} by $\nu_t$ is natural as well. Indeed, it is customary in time-series analysis to consider \emph{autoregressive} dynamics. In discrete time, an autoregressive process is a stochastic process that satisfies a relation of the form:
\[X_t=\sum_{k=0}^K \alpha_k f(t-k, X_{t-k})+\eps_t\]
where $\eps_t$ are independent and identically distributed random variables. Such processes thus have some memory (the range of the memory being the integer $K$, called the order of the process), and the fact that the previous dynamics is given by a convolution captures some stationarity of the memory structure. This is a particular case of
\[X_t=\sum_{s\leq k} \nu_{t}(s) f(s, X_{s})+\eps_t.\]
 Formally passing to continuous time in the previous equation strongly suggests that estimating the memory structure  of the process by least squares naturally leads to solve a problem of the form \pref{state0}-\pref{ocnu}.

\smallskip

Let us insist here on the fact that the optimal control problem \pref{state0}-\pref{ocnu} in which the memory structure is unknown and acts as a control is, as far as we know, somehow unusual. For variational or optimal control problems where a deviation or a memory structure is given and one looks for a classical optimal control, we refer to \cite{st}, \cite{cht}, \cite{ct} (necessary optimality conditions) or \cite{cthjb} (dynamic programming approach leading to an Hamilton-Jacobi equation in infinite dimensions).

\smallskip

The paper is organized as follows. Section \ref{prel} is devoted to some preliminary results. In section \ref{exist}, we prove existence of solutions for \pref{ocnu} and show that \pref{ocnu} is the natural relaxation of an optimization problem posed over deviation functions. In section \ref{pmp}, we establish optimality conditions. Finally section \ref{exampl} is devoted to some examples.

\section{Preliminaries}\label{prel}

Let $(\nu_t)_{t\in[0,1]}$ be a Borel family of probability measures (meaning that $t\mapsto\int g\,d\nu_t$ is Borel for every continuous $g$) such that $\nu_t([0,t])=1$ for every $t$. Let  $f \in C^0([0,1]\times \R^d, \R^d)$ satisfy the Lipschitz condition that there is a $k\geq 0$ such that:
\begin{equation}\label{flip}
|f(t,x)-f(t,y)|\le k|x-y|\qquad\forall t\in[0,1],\ \forall(x,y)\in\R^d\times\R^d.
\end{equation}

The next result gives the existence and uniqueness of a solution to the state equation \pref{state0}:

\begin{prop}\label{cauchylip}
Let $f\in C^0([0,1]\times\R^d,\R^d)$ satisfy the Lipschitz assumption \pref{flip}, $(\nu_t)_{t\in[0,1]}$ be as above and $x_0\in\R^d$. Then the Cauchy problem
\begin{equation}\label{cauchy}
\dot{x}(t)=\<f(., x(.)), \nu_t>,\; \forall t\in[0,1],\; x(0)=x_0
\end{equation}
admits a unique continuous solution $x$. Moreover $x$ is Lipschitz continuous and satisfies $\|x\|_{W^{1,\infty}}\le M$ for a constant $M$ that only depends on $k$, $|x_0|$ and $\sup_{t\in[0,1]}|f(t,0)|$.
\end{prop}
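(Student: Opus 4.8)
The plan is to set up a Picard-type fixed-point argument on the space $C^0([0,1],\R^d)$, exactly as in the classical Cauchy--Lipschitz theorem, but being careful that the right-hand side is now a nonlocal operator involving the measures $\nu_t$. First I would define, for $x\in C^0([0,1],\R^d)$, the map
\[
(Tx)(t)\ddd x_0+\int_0^t\<f(.,x(.)),\nu_\tau>\,d\tau
=x_0+\int_0^t\Big(\int_0^\tau f(s,x(s))\,d\nu_\tau(s)\Big)\,d\tau .
\]
The first point to check is that $T$ is well defined: for fixed continuous $x$, the function $s\mapsto f(s,x(s))$ is continuous on $[0,1]$, hence bounded, so $\tau\mapsto\<f(.,x(.)),\nu_\tau>$ is bounded; it is also Borel because $(\nu_\tau)$ is a Borel family and $f(.,x(.))$ is continuous, so the outer integral makes sense and $Tx\in C^0$ (indeed $Tx$ is Lipschitz). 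A solution of \pref{cauchy} is precisely a fixed point of $T$, since $t\mapsto\<f(.,x(.)),\nu_t>$ is bounded and therefore the integral formulation is equivalent to the differential one a.e., and continuity of $x$ together with boundedness of the integrand gives that $Tx$ is everywhere differentiable from the right with the stated derivative — here I would simply note that $x\in C^0$ solves the integral equation iff it solves \pref{cauchy}.

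Next I would establish the a priori bound. If $x$ is any solution, then using \pref{flip} in the form $|f(s,x(s))|\le |f(s,0)|+k|x(s)|\le C_0+k|x(s)|$ with $C_0\ddd\sup_{t}|f(t,0)|$, and the fact that $\nu_\tau$ is a probability measure supported in $[0,\tau]$, one gets
\[
|x(t)|\le |x_0|+\int_0^t\big(C_0+k\sup_{[0,\tau]}|x|\big)\,d\tau
\le |x_0|+C_0+k\int_0^t\sup_{[0,\tau]}|x|\,d\tau .
\]
Setting $\phi(t)\ddd\sup_{[0,t]}|x|$, this reads $\phi(t)\le |x_0|+C_0+k\int_0^t\phi$, and Gronwall's lemma yields $\|x\|_{L^\infty}\le (|x_0|+C_0)e^{k}$; plugging this back into the equation bounds $\|\dot x\|_{L^\infty}$ as well, giving $\|x\|_{W^{1,\infty}}\le M$ with $M$ depending only on $k$, $|x_0|$, $C_0$, as claimed.

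For existence and uniqueness I would then either iterate $T$ and show a contraction on a suitable equivalent norm, or — more cleanly — use the a priori bound to restrict to a ball: let $R\ddd M$, work in the closed convex set $K\ddd\{x\in C^0([0,1],\R^d):\ \|x\|_{L^\infty}\le R,\ x(0)=x_0\}$, check $T(K)\subset K$ and that $T(K)$ is equi-Lipschitz hence precompact by Ascoli, and apply Schauder — but uniqueness is wanted too, so the contraction route is preferable. The standard trick is to equip $C^0$ with the weighted norm $\|x\|_\lambda\ddd\sup_t e^{-\lambda t}|x(t)|$ for $\lambda$ large. The key estimate is
\[
|(Tx)(t)-(Ty)(t)|\le\int_0^t\Big|\int_0^\tau\big(f(s,x(s))-f(s,y(s))\big)\,d\nu_\tau(s)\Big|\,d\tau
\le k\int_0^t\sup_{[0,\tau]}|x-y|\,d\tau ,
\]
and since $\sup_{[0,\tau]}|x-y|\le \|x-y\|_\lambda\, e^{\lambda\tau}$, this gives $|(Tx)(t)-(Ty)(t)|\le \frac{k}{\lambda}e^{\lambda t}\|x-y\|_\lambda$, i.e. $\|Tx-Ty\|_\lambda\le\frac{k}{\lambda}\|x-y\|_\lambda$. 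Choosing $\lambda>k$ makes $T$ a contraction on the Banach space $(C^0([0,1],\R^d),\|\cdot\|_\lambda)$, and the Banach fixed point theorem delivers a unique fixed point, hence a unique continuous solution of \pref{cauchy}. Its Lipschitz regularity and the bound $\|x\|_{W^{1,\infty}}\le M$ follow from the a priori estimate above.

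The only genuinely non-routine point — the "main obstacle" — is checking that the nonlocal operator behaves well with respect to measurability and continuity: namely that $\tau\mapsto\<f(.,x(.)),\nu_\tau>$ is Borel and bounded (so that $Tx$ is a well-defined continuous, indeed Lipschitz, function), and that the supremum $\sup_{[0,\tau]}|x-y|$ rather than $|x(\tau)-y(\tau)|$ is what appears in the Lipschitz estimate because $\nu_\tau$ may charge the whole interval $[0,\tau]$. Once one is careful to carry the $\sup$ over the past through Gronwall and through the weighted-norm contraction estimate, everything reduces to the classical argument; the support condition $\nu_\tau([0,\tau])=1$ and the fact that each $\nu_\tau$ is a probability measure are exactly what make the constants come out depending only on $k$, $|x_0|$ and $\sup_t|f(t,0)|$.
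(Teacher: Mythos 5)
Your proof is correct and takes essentially the same approach as the paper: defining the integral operator $T$, equipping $C^0([0,1],\R^d)$ with the weighted norm $\|x\|_\lambda=\sup_t e^{-\lambda t}|x(t)|$ for $\lambda>k$, and showing that nonanticipativity of $(\nu_t)$ together with the Lipschitz bound on $f$ makes $T$ a $\tfrac{k}{\lambda}$-contraction. The additional material you supply (the Gronwall a priori estimate and the measurability check) is a sound expansion of what the paper dismisses as ``the second claim easily follows.''
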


\begin{proof}
We proceed as in \cite{ct}. Let $\lambda>k$ and equip $C^{0}([0,1], \R^d)$ with the norm
\[\|x\|_{\lambda}:=\sup_{t\in [0,1]} e^{-\lambda t}|x(t)|.\]
For every $x\in C^{0}([0,1], \R^d)$, define the continuous function $Tx$ by
\[Tx(t):=x_0+\int_0^t\<f(.,x(.)),\nu_s>\,ds\qquad\forall t\in[0,1].\]
For $x$ and $y$ continuous and $t\in [0,1]$, one deduces from \pref{flip} and the nonanticipativity of $(\nu_t)_t$:
\[|Tx(t)-Ty(t)|\le k\|x-y\|_{\lambda}\int_0^t\int_0^s e^{\lambda\tau}\,d\nu_s(\tau)\,ds\le\frac{k}{\lambda}\|x-y\|_{\lambda}e^{\lambda t}.\]
One then deduces that $T$ is a contraction and therefore has a unique fixed point in $C^0$. The second claim easily follows. 
\end{proof}

In what follows,  $\Leb^1$ denotes the Lebesgue measure on $[0,1]$. Let $(\nu_t)_t$ be admissible for our problem which means that it is a Borel family of probability measures such  that $\nu_t([0,t])=1$ for every $t$. It will be convenient in the sequel to define $\gamma:=\nu_t \otimes \Leb^1$ that is the probability measure on $[0,1]^2$ defined by
\begin{equation}\label{defigamma}
\int_{[0,1]^2}\varphi(t,s)\,d\gamma(t,s)=\int_0^1\Big(\int_0^1\varphi(t,s)\,d\nu_t(s)\Big)\,dt
\end{equation}
for every $\varphi\in C^0([0,1]^2,\R)$. The admissibility of $(\nu_t)_t$ is equivalent to require  that $\gamma:=\nu_t \otimes \Leb^1$ belongs to the set 
\begin{equation}\label{defdeGamma}
\Gamma:=\{ \gamma \mbox{ probability on } [0,1]^2 \; : \; \gamma(T)=1, \; {\pi_1}_\# \gamma=\Leb^1\}.
\end{equation}
where $T$ is the triangle $T:=\{(t,s)\in [0,1]^2 \; : \; s\leq t\}$ and ${\pi_1}_\# \gamma$ denotes  the first  marginal of $\gamma$. Given $\gamma\in \Gamma$, the disintegration Theorem (see \cite{dm}) enables one to disintegrate $\gamma$ as $\gamma=\nu_t \otimes \Leb^1$ with $(\nu_t)_t$ admissible. In the sequel, under the assumptions of  proposition \ref{cauchylip}, the solution of the Cauchy problem \pref{cauchy} will be denoted $x_\gamma$. Let us also remark that $x_\gamma$ can be characterized by the weak form of \pref{cauchy} that can be conveniently written in terms of $\gamma$ as:
\begin{equation}\label{weakform}
\int_0^1\dot{\varphi}(t)\cdot x_\gamma(t)\,dt
=-\int_{[0,1]^2}\varphi(t)\cdot f(s,x_{\gamma}(s))\,d\gamma(t,s),\quad x_{\gamma}(0)=x_0
\end{equation}
for every $\varphi\in C_c^1((0,1),\R^d)$. The other obvious advantage of formulating the problem in terms of $\gamma\in\Gamma$ is that $\Gamma$ is weakly* compact.

In the sequel, we shall always assume that $f\in C^0([0,1]\times\R^d,\R^d)$ satisfy the Lipschitz assumption \pref{flip} so that $x_\gamma$ is well-defined for every $\gamma\in\Gamma$. 

\begin{lem}\label{cvforte}
If $(\gamma_n)\in\Gamma^\N$ weakly* converges to $\gamma$, then $x_{\gamma_n}$ converges to $x_\gamma$ in $C^0([0,1],\R^d)$. 
\end{lem}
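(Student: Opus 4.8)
The plan is to use a compactness-plus-identification argument. First I would invoke Proposition \ref{cauchylip}: since all $\gamma_n\in\Gamma$, the constant $M$ bounding $\|x_{\gamma_n}\|_{W^{1,\infty}}$ depends only on $k$, $|x_0|$ and $\sup_t|f(t,0)|$, hence is uniform in $n$. Thus $(x_{\gamma_n})_n$ is bounded in $W^{1,\infty}([0,1],\R^d)$, in particular equi-Lipschitz and equibounded, so by Ascoli--Arzel\`a some subsequence converges uniformly on $[0,1]$ to a limit $x\in C^0([0,1],\R^d)$ (which is itself $M$-Lipschitz, and $x(0)=x_0$).

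Next I would pass to the limit in the weak formulation \pref{weakform}: for each $\varphi\in C_c^1((0,1),\R^d)$,
\[\int_0^1\dot{\varphi}(t)\cdot x_{\gamma_n}(t)\,dt=-\int_{[0,1]^2}\varphi(t)\cdot f(s,x_{\gamma_n}(s))\,d\gamma_n(t,s).\]
The left-hand side converges to $\int_0^1\dot\varphi(t)\cdot x(t)\,dt$ by uniform convergence of $x_{\gamma_n}$. For the right-hand side, the key point is that the integrand $(t,s)\mapsto\varphi(t)\cdot f(s,x_{\gamma_n}(s))$ converges \emph{uniformly} on $[0,1]^2$ to $(t,s)\mapsto\varphi(t)\cdot f(s,x(s))$: indeed $\abs{f(s,x_{\gamma_n}(s))-f(s,x(s))}\le k\abs{x_{\gamma_n}(s)-x(s)}\le k\norm{x_{\gamma_n}-x}_\infty\to0$ by the Lipschitz assumption \pref{flip}. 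Combining uniform convergence of the integrand with weak* convergence $\gamma_n\weakstarto\gamma$ (write $\int\psi_n\,d\gamma_n-\int\psi\,d\gamma=\int(\psi_n-\psi)\,d\gamma_n+(\int\psi\,d\gamma_n-\int\psi\,d\gamma)$, the first term bounded by $\norm{\psi_n-\psi}_\infty$ since $\gamma_n$ is a probability measure, the second going to $0$ by definition of weak* convergence), the right-hand side converges to $-\int_{[0,1]^2}\varphi(t)\cdot f(s,x(s))\,d\gamma(t,s)$. Hence $x$ satisfies \pref{weakform} with $\gamma$ in place of $\gamma_n$, i.e.\ $x$ is a solution of the Cauchy problem \pref{cauchy} associated with $\gamma$; by the uniqueness part of Proposition \ref{cauchylip}, $x=x_\gamma$.

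Finally, since the limit $x_\gamma$ is the same for every convergent subsequence, a standard subsequence argument gives that the whole sequence $x_{\gamma_n}$ converges to $x_\gamma$ in $C^0([0,1],\R^d)$. The only mildly delicate point is the passage to the limit in the nonlinear term $\int f(s,x_{\gamma_n}(s))\,d\gamma_n$, where one must handle simultaneously the varying integrand and the varying measure; the Lipschitz bound \pref{flip} together with the uniform bound on $\norm{x_{\gamma_n}}_\infty$ is exactly what makes the integrands converge uniformly, so that weak* convergence of $\gamma_n$ can be applied. Everything else is routine.
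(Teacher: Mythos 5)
Your proof is correct and follows essentially the same approach as the paper: uniform $W^{1,\infty}$ bound from Proposition \ref{cauchylip}, Ascoli--Arzel\`a for a uniformly convergent subsequence, passage to the limit in the weak formulation \pref{weakform} to identify the limit as $x_\gamma$, and a standard subsequence argument. The only (harmless) difference is that you justify the convergence of $\int\varphi(t)\cdot f(s,x_{\gamma_n}(s))\,d\gamma_n$ via the Lipschitz bound \pref{flip} to get uniform convergence of the integrands, whereas the paper appeals more briefly to continuity of $f$; your version is a clean and slightly more explicit way to reach the same conclusion.
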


\begin{proof}
Setting $x_n:=x_{\gamma_n}$, it follows from proposition \ref{cauchylip} and Ascoli-Arzel\`a's theorem that $(x_n)$ is precompact in $C^0$. Let $y$ be the uniform limit of some (not relabeled) subsequence. Let $\varphi\in C_c^1((0,1),\R^d)$, one has for every $n$
\begin{equation}\label{weakformn}
\int_0^1\dot{\varphi}(t)\cdot x_n(t)\,dt
=-\int_{[0,1]^2}\varphi(t)\cdot f(s,x_n(s))\,d\gamma_n(t,s),\ x_n(0)=x_0.
\end{equation}
Since $x_n$ converges uniformly to $y$, and $\gamma_n$ converges weakly* to $\gamma$, passing to the limit in \pref{weakformn} one deduces from the continuity of $f$ that $y=x_{\gamma}$, and by a standard compactness argument we deduce that the whole sequence converges to $x_\gamma$.
\end{proof}

\section{Existence of optimal controls and relaxation}\label{exist}

We are now interested in the optimization problem 
\begin{equation}\label{optigamma}
\inf_{\gamma\in \Gamma}  J(\gamma)
\end{equation}
where
\[J(\gamma):=\int_0^1 j(t, x_{\gamma}(t))\,dt + h(x_\gamma(1))+ \int_{T} g(t,s)\;d\gamma(t,s).\]

\begin{thm}\label{exthm}
We make the following assumptions:
\begin{itemize}
\item the function $j:[0,1]\times\R^d\to[0,+\infty]$ is Borel measurable, nonnegative and $j(t,\cdot)$ is lower semicontinuous for a.e. $t\in[0,1]$;
\item the function $h:\R^d\to[0,+\infty]$ is nonnegative and lower semicontinuous;
\item the function $g: T \to [0, +\infty]$ is nonnegative and lower semicontinuous on $T$;
\item there exists $\gamma_0\in \Gamma$ such that $J(\gamma_0)<+\infty$.
\end{itemize}
Then the optimal control problem \eqref{optigamma} has a finite value and admits a solution.
\end{thm}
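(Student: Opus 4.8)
The plan is to use the direct method of the calculus of variations, exploiting the two structural facts already established: that $\Gamma$ is weakly* compact (as a weakly* closed subset of the probability measures on the compact set $[0,1]^2$, with the marginal constraint and the support constraint $\gamma(T)=1$ both being preserved under weak* limits, since $T$ is closed), and that $\gamma\mapsto x_\gamma$ is weak*-to-uniform continuous by Lemma~\ref{cvforte}.

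First I would take a minimizing sequence $(\gamma_n)\subset\Gamma$ for $J$; this makes sense because the last hypothesis guarantees $\inf_\Gamma J\le J(\gamma_0)<+\infty$, so the infimum is a real number. By weak* compactness of $\Gamma$, after passing to a subsequence I may assume $\gamma_n\weakstarto\bar\gamma$ for some $\bar\gamma\in\Gamma$. Then I want to show $J(\bar\gamma)\le\liminf_n J(\gamma_n)$, which together with $\bar\gamma\in\Gamma$ forces $J(\bar\gamma)=\inf_\Gamma J$, i.e. $\bar\gamma$ is a solution.

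The lower semicontinuity is checked term by term. By Lemma~\ref{cvforte}, $x_{\gamma_n}\to x_{\bar\gamma}$ uniformly on $[0,1]$. Hence $x_{\gamma_n}(1)\to x_{\bar\gamma}(1)$ and lower semicontinuity of $h$ gives $h(x_{\bar\gamma}(1))\le\liminf_n h(x_{\gamma_n}(1))$. For the running cost, uniform convergence of $x_{\gamma_n}$ plus the lower semicontinuity of $j(t,\cdot)$ for a.e. $t$ gives $j(t,x_{\bar\gamma}(t))\le\liminf_n j(t,x_{\gamma_n}(t))$ a.e.; since $j\ge0$, Fatou's lemma yields $\int_0^1 j(t,x_{\bar\gamma}(t))\,dt\le\liminf_n\int_0^1 j(t,x_{\gamma_n}(t))\,dt$. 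For the memory cost, $g$ is nonnegative and lower semicontinuous on $T$; extending $g$ to all of $[0,1]^2$ by $+\infty$ off $T$ (which preserves lower semicontinuity since $T$ is closed) and using the standard fact that $\gamma\mapsto\int \psi\,d\gamma$ is weakly* lower semicontinuous for nonnegative lower semicontinuous $\psi$ (approximate $\psi$ from below by an increasing sequence of bounded continuous functions), and noting that every $\gamma_n$ and $\bar\gamma$ are concentrated on $T$, I get $\int_T g\,d\bar\gamma\le\liminf_n\int_T g\,d\gamma_n$. Adding the three inequalities gives $J(\bar\gamma)\le\liminf_n J(\gamma_n)=\inf_\Gamma J$, completing the proof.

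The only point requiring a little care—and the place I would be most careful in writing—is the lower semicontinuity of the memory term, because $g$ may be $+\infty$-valued and is only defined on $T$: one must make sure the extension by $+\infty$ is legitimate and that the classical weak* lower semicontinuity result for integral functionals with a nonnegative lsc integrand applies. This is standard (Portmanteau-type argument via monotone approximation by bounded continuous functions, combined with monotone convergence), so I do not expect a genuine obstacle; the rest is a routine application of the direct method.
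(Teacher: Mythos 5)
Your proposal is correct and follows the same route as the paper: weak* compactness of $\Gamma$, the weak*-to-uniform continuity of $\gamma\mapsto x_\gamma$ from Lemma~\ref{cvforte}, and weak* lower semicontinuity of each term of $J$ (with the memory term handled exactly as the paper suggests, by monotone approximation of the nonnegative lsc integrand $g$ from below by continuous functions). You have simply spelled out, term by term and via an explicit minimizing sequence, the details that the paper compresses into one sentence.
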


\begin{proof}
Thanks to lemma \ref{cvforte}, one easily deduces from the assumptions above that $J$ is weakly* lower-semicontinuous (for the lower semicontinuity of the last term express $g$ as the supremum of continuous functions) and the existence claim then follows from the weak* compactness of $\Gamma$.
\end{proof}

Let us define the set of admissible deviations :
\[\Theta:=\{\theta:[0,1]\to[0,1]\mbox{ Borel},\ \theta(t)\le t,\mbox{ for a.e. }t\in[0,1]\}\]
and the optimal deviation problem
\begin{equation}\label{optidev}
\inf_{\theta\in\Theta}F(\theta),
\quad\mbox{with }F(\theta):=J(\delta_{\theta}\otimes\Leb^1)
\end{equation}
that is the restriction of \pref{optigamma} to deviation (or delay) functions.   Then \pref{optigamma} is the relaxation of \pref{optidev} in the following sense.

\begin{thm}\label{relax}
If $j\in C^0([0,1]\times \R^d)$, $h\in C^0(\R^d)$ and $g\in C^0(T)$, then for every $\gamma\in \Gamma$, there exists a sequence $\theta_n$ in $\Theta$ such that $F(\theta_n)$ converges to $J(\gamma)$. In particular
\[\min_{\gamma\in\Gamma} J(\gamma)=\inf_{\theta\in\Theta}F(\theta).\]
\end{thm}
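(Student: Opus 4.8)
The plan is to show that the set $\{\delta_\theta\otimes\Leb^1:\theta\in\Theta\}$ is weakly* dense in $\Gamma$; combined with Lemma \ref{cvforte} and the continuity assumptions on $j$, $h$, $g$ (which make $J$ weakly* continuous — not merely lower semicontinuous — when these data are continuous), this will give both the approximation statement $F(\theta_n)=J(\delta_{\theta_n}\otimes\Leb^1)\to J(\gamma)$ and, together with Theorem \ref{exthm}, the equality of the infima. So the heart of the matter is the density claim.

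To prove density, fix $\gamma\in\Gamma$ and a finite family of test functions $\varphi_1,\dots,\varphi_N\in C^0([0,1]^2)$ and $\eps>0$; I must produce $\theta\in\Theta$ with $\big|\int\varphi_i\,d(\delta_\theta\otimes\Leb^1)-\int\varphi_i\,d\gamma\big|<\eps$ for all $i$. First I would partition $[0,1]$ into finitely many intervals $I_1,\dots,I_m$ of small length, so that each $\varphi_i(t,s)$ varies by less than $\eps$ in the $t$-variable over each $I_\ell\times[0,1]$; this reduces matters to approximating, on each strip $I_\ell\times[0,1]$, the conditional measure $\nu_t$ (for $t$ in that strip) by a Dirac $\delta_{\theta(t)}$ in an averaged sense. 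Concretely, writing $\gamma=\nu_t\otimes\Leb^1$ via disintegration, on a short interval $I_\ell=[a,b]$ the quantity $\frac{1}{b-a}\int_a^b\!\int_0^1\psi(s)\,d\nu_t(s)\,dt$ defines a probability measure $\mu_\ell$ on $[0,a]$ (using nonanticipativity, since $s\le t$ and $t$ is close to $a$); I then need a Borel map $\theta:I_\ell\to[0,a]\subseteq[0,t]$ such that the push-forward of $\Leb^1{\res}I_\ell$, normalized, is close to $\mu_\ell$ — and this is exactly the classical fact that any Borel probability measure on $[0,1]$ is the image of Lebesgue measure under a Borel (even monotone, via the quantile function) map. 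The nonanticipativity constraint $\theta(t)\le t$ is respected because I send all of $I_\ell$ into $[0,a]$, and $a\le t$ for $t\in I_\ell$; one must be slightly careful on the first interval containing $0$, where $\nu_t$ is forced to concentrate near $0$ anyway, so taking $\theta\equiv 0$ there is harmless as the interval shrinks.

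Assembling the $\theta$ on each $I_\ell$ gives a global $\theta\in\Theta$, and by construction $\int\varphi_i\,d(\delta_\theta\otimes\Leb^1)$ is within $O(\eps)$ of $\int\varphi_i\,d\gamma$ for each $i$ in the chosen finite family. Since $C^0([0,1]^2)$ is separable, a diagonal argument over a countable dense family of test functions and $\eps=1/n$ produces a sequence $\theta_n\in\Theta$ with $\delta_{\theta_n}\otimes\Leb^1\weakstarto\gamma$. By Lemma \ref{cvforte}, $x_{\delta_{\theta_n}\otimes\Leb^1}\to x_\gamma$ uniformly, so $\int_0^1 j(t,x_{\theta_n}(t))\,dt\to\int_0^1 j(t,x_\gamma(t))\,dt$ and $h(x_{\theta_n}(1))\to h(x_\gamma(1))$ by continuity of $j$ and $h$, while $\int_T g\,d(\delta_{\theta_n}\otimes\Leb^1)\to\int_T g\,d\gamma$ by weak* convergence and continuity of $g$ (extended continuously to a neighbourhood of $T$ in $[0,1]^2$, or simply using that $T$ is closed and $\gamma,\delta_{\theta_n}\otimes\Leb^1$ are all supported in $T$). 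Hence $F(\theta_n)=J(\delta_{\theta_n}\otimes\Leb^1)\to J(\gamma)$. The final equality of the infima is then immediate: $\inf_\Theta F\ge\min_\Gamma J$ since $\delta_\theta\otimes\Leb^1\in\Gamma$, and $\inf_\Theta F\le J(\gamma)$ for every $\gamma$ by the approximation just proved, so $\inf_\Theta F\le\min_\Gamma J$.

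The main obstacle is the density step, specifically handling the nonanticipativity constraint: one cannot simply approximate $\nu_t$ by a Dirac at $\nu_t$'s own level $t$, because that would ignore the spread of $\nu_t$ inside $[0,t]$. The device of "spending a whole short interval $I_\ell$ to realize one conditional law $\mu_\ell$ living in $[0,\min I_\ell]$" is what makes it work, and checking that this construction keeps $\theta(t)\le t$ while keeping the averaged error small is the only genuinely delicate point; everything else is soft (separability, disintegration, and the weak* continuity of $J$ under continuous data).
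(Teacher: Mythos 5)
Your approach is genuinely different from the paper's. The paper's proof cites a known density result for Young measures (Theorem~9.3 in Ambrosio's lecture notes) to produce Borel maps $\sigma_n$ with $\delta_{\sigma_n}\otimes\Leb^1\weakstarto\gamma$, then enforces nonanticipativity a posteriori by setting $\theta_n(t):=\min\{\sigma_n(t),t\}$ and controls the discrepancy $\int_0^1\abs{\varphi(t,\theta_n(t))-\varphi(t,\sigma_n(t))}\,dt$ by the modulus of continuity of $\varphi$ together with the fact that $\Leb^1(\{t:\sigma_n(t)\ge t+\delta\})\to 0$ (since $\gamma$ is supported in $T$). You instead reprove the density statement from scratch by partitioning $[0,1]$ into short strips, averaging $\nu_t$ over each strip, and realizing each averaged law by a quantile map. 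That is a more self-contained and constructive argument, at the cost of more bookkeeping; the paper's route is shorter because it outsources the hard part to a cited theorem and only has to patch up nonanticipativity.

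There is, however, a genuine (though fixable) gap in your density construction. On a strip $I_\ell=[a,b]$, the averaged measure $\mu_\ell:=(b-a)^{-1}\int_a^b\nu_t\,dt$ is supported on $[0,b]$, \emph{not} on $[0,a]$ as you assert: nonanticipativity only gives $\nu_t([0,t])=1$, so for $t\in(a,b]$ the measure $\nu_t$ may well charge $(a,t]$. If you then realize $\mu_\ell$ by a quantile map $Q_\ell:I_\ell\to[0,b]$ and set $\theta(t):=Q_\ell\bigl((t-a)/(b-a)\bigr)$, you can have $\theta(t)>t$ for $t$ near $a$ (take the extreme case $\mu_\ell=\delta_b$), so the constructed $\theta$ need not lie in $\Theta$. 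The claim ``using nonanticipativity, since $s\le t$ and $t$ is close to $a$'' does not deliver $\spt\mu_\ell\subset[0,a]$. The repair is small and is in fact exactly the device the paper uses: either first push $\mu_\ell$ forward under $s\mapsto\min(s,a)$ before taking the quantile map (the resulting error on each $\varphi_i$ is at most $\omega_{\varphi_i}(b-a)$, which you are already budgeting for), or, equivalently, truncate by $\theta(t):=\min\{Q_\ell((t-a)/(b-a)),t\}$ and control the discrepancy via the modulus of continuity and the smallness of the mass of $\mu_\ell$ above level $a$. With that correction your argument is sound; as written, the nonanticipativity of the constructed $\theta$ is not actually established.
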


\begin{proof}
It follows from lemma \ref{cvforte} that $J$ is weakly* continuous. Fix now $\gamma\in\Gamma$; it is a well-known result in the theory of Young measures (see for instance Theorem 9.3 in \cite{amb}) that there exists a sequence of Borel maps $\sigma_n:[0,1]\mapsto[0,1]$ such that $\delta_{\sigma_n}\otimes \Leb^1$ converges weakly* to $\gamma$. Defining $\theta_n(t):=\min\{\sigma_n(t),t\}$ so that $\theta_n\in\Theta$, it is obviously enough to prove that $\delta_{\theta_n}\otimes\Leb^1$ converges weakly* to $\gamma$ to conclude. Let $\varphi\in C^0([0,1]^2)$ and let us denote by $\omega_\varphi$ the modulus of continuity of $\varphi$. For every  $\delta >0$, we have
\[\Big|\int_0^1\varphi(t,\theta_n(t))-\varphi(t,\sigma_n(t))\,dt\Big|
\le2\|\varphi\|_{\infty}\Leb^1(\{t\ :\ \sigma_n(t)\ge t+\delta\})+\omega_{\varphi}(\delta).\]
From the weak* convergence of $\delta_{\sigma_n}\otimes\Leb^1$ to $\gamma$ and from the fact that the support of $\gamma$ is included in the triangle $T$, it is easy to deduce that for every $\delta>0$, $\Leb^1(\{t\ :\ \sigma_n(t)\ge t+ \delta\})$ tends to $0$. The weak*convergence of $\delta_{\theta_n}\otimes\Leb^1$ to $\gamma$ then follows directly.
\end{proof}

It is easy to build from the previous result  examples where there is no optimal delay function  for \pref{optidev}. Indeed, let us take $d=1$, $f(x)=x$, $x_0=1$, $h=0$, $j(t,x)=|x-x^*(t)|^2$ with $x^*(t)=-1+2e^{t/2}$ and $g(t,s)=s(t-s)$. By construction $x^*=x_{\gamma^*}$ for $\gamma^*=(\delta_0+\delta_t)/2\otimes \Leb^1$ so that $J(\gamma^*)=0$ and $\gamma^*$ is optimal. If $\theta$ was an optimal delay for \pref{optidev} then one should have $F(\theta)=0$ so that $\theta(t)\in \{0, t\}$ a.e. and  $x_\theta=x^*$ (where slightly abusing notations $x_\theta$ denotes $x_{\delta_\theta\otimes \Leb^1}$). From the state equation one should then also have 
\[e^{t/2}=x^*(\theta(t))=-1+2e^{\theta(t)/2}, \mbox{ a.e.}\]
which contradicts  $\theta(t)\in \{0, t\}$ a.e..  

\section{Optimality conditions}\label{pmp}

\subsection{A maximum principle}

In this section, we look for optimality conditions for \pref{optigamma}. In what follows, we further assume that $j$, $h$ and $g$ are continuous and  that 
\begin{itemize}
\item $h$ is of class $C^1$,
\item $j(t,.)$ and $f(t,.)$ are differentiable for every $t\in[0,1]$ and $\nabla_x j$ and $D_x f$ are continuous on $[0,1]\times \R^d$.
\end{itemize}
Let $\gamma=\nu_t\otimes\Leb^1$ be a solution to \pref{optigamma}, $\eta=\mu_t\otimes\Leb^1\in\Gamma$ and $\eps\in(0,1)$; then 
\begin{equation}\label{optizero}
\frac{1}{\eps} [J(\gamma+\eps(\eta-\gamma))-J(\gamma)]\geq 0.
\end{equation}
As usual, our aim is to pass to the limit as $\eps\to 0^+$ and to express the optimality conditions obtained this way in the form of some maximum principle, which will be achieved by introducing some suitable adjoint variable. To shorten notation, we set $\gamma_\eps=\gamma+\eps(\eta-\gamma)$, $x=x_{\gamma}$ and  $x_\eps=x_{\gamma_\eps}$.

\begin{lem}\label{linlem}
As $\eps\to0^+$, $z_\eps:=\eps^{-1}(x_\eps-x)$ converges uniformly on $[0,1]$ to $z$, the solution of the linearized system:
\begin{equation}\label{lin}
\dot{z}(t)=\<Az,\nu_t>+\<a,\mu_t-\nu_t>,\qquad t\in[0,1],\ z(0)=0
\end{equation}
where $A$ and $a$  are the continuous functions:
\begin{equation}\label{Aa}
A(s):=D_x f(s,x(s)),\quad a(s):=f(s,x(s)),\quad\forall s\in[0,1].
\end{equation}
\end{lem}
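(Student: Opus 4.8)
The plan is to establish the uniform convergence of the difference quotients $z_\eps = \eps^{-1}(x_\eps - x)$ by a Gronwall-type argument, after first checking that the candidate limit $z$ is well-defined. The linearized system \pref{lin} is itself a Cauchy problem of exactly the type covered by Proposition \ref{cauchylip}: rewriting it as $\dot z(t) = \langle A z + b, \nu_t\rangle$ with $b(s) := \langle a, \mu_s - \nu_s\rangle \cdot$ (a fixed continuous, in fact Lipschitz-independent, source term), one sees it is a linear equation with a uniformly Lipschitz right-hand side in $z$, so it has a unique continuous (indeed Lipschitz) solution $z$. Actually it is cleaner to note that $A(\cdot)$ is bounded on $[0,1]$ (being continuous, with $x$ bounded by Proposition \ref{cauchylip}), so existence and uniqueness of $z$ follows by the same contraction argument with the $\|\cdot\|_\lambda$ norm.

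The core estimate goes as follows. First I would record that both $x$ and $x_\eps$ satisfy the a priori bound $\|x_\eps\|_{W^{1,\infty}} \le M$ from Proposition \ref{cauchylip}, with $M$ uniform in $\eps$ (the constant depends only on $k$, $|x_0|$ and $\sup|f(t,0)|$, none of which change with $\eps$). Consequently $x_\eps \to x$ uniformly — this is essentially Lemma \ref{cvforte}, since $\gamma_\eps \weakstarto \gamma$ — and in particular the $x_\eps$ all lie in a fixed compact set $K \subset \R^d$. Next, writing out the integrated forms of the equations for $x_\eps$ and $x$ and subtracting, one gets
\[
z_\eps(t) = \int_0^t \Big\langle \tfrac{1}{\eps}\big(f(\cdot,x_\eps(\cdot)) - f(\cdot,x(\cdot))\big), \nu_s\Big\rangle\,ds + \int_0^t \big\langle f(\cdot,x_\eps(\cdot)), \mu_s - \nu_s\big\rangle\,ds.
\]
In the first term I would insert and subtract $\langle A z_\eps, \nu_s\rangle$ using the mean value form $f(\tau,x_\eps(\tau)) - f(\tau,x(\tau)) = \big(\int_0^1 D_x f(\tau, x(\tau) + r(x_\eps(\tau)-x(\tau)))\,dr\big)(x_\eps(\tau)-x(\tau))$; dividing by $\eps$ turns $x_\eps-x$ into $\eps z_\eps$, and the integrand becomes $R_\eps(\tau) z_\eps(\tau)$ with $R_\eps(\tau) \to A(\tau)$ uniformly in $\tau$ by continuity of $D_x f$ and uniform convergence $x_\eps \to x$ (both arguments staying in the compact $K$). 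In the second term I replace $f(\cdot,x_\eps(\cdot))$ by $f(\cdot,x(\cdot)) = a(\cdot)$, the error being controlled by $\sup_\tau |f(\tau,x_\eps(\tau)) - f(\tau,x(\tau))| \le k\|x_\eps - x\|_\infty \to 0$, using that $\mu_s - \nu_s$ has total variation at most $2$. Subtracting the integrated form of \pref{lin}, one obtains $z_\eps(t) - z(t) = \int_0^t \langle R_\eps z_\eps - A z, \nu_s\rangle\,ds + (\text{errors} \to 0)$, and splitting $R_\eps z_\eps - A z = R_\eps(z_\eps - z) + (R_\eps - A) z$, Gronwall's lemma (applied after noting $\sup_\eps \|R_\eps\|_\infty < \infty$) yields $\|z_\eps - z\|_\infty \le C(\rho_\eps)$ with $\rho_\eps \to 0$, hence the claimed uniform convergence.

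The main obstacle, and the only place requiring care, is establishing the uniform (in $t$) convergence $R_\eps(\cdot) \to A(\cdot)$ and an a priori $L^\infty$-bound on $z_\eps$ that is uniform in $\eps$ so that the mean-value expansion is legitimate and Gronwall applies cleanly. The a priori bound on $z_\eps$ follows by a preliminary Gronwall argument directly on the identity for $z_\eps$ above: since $|f(\tau,x_\eps(\tau)) - f(\tau,x(\tau))| \le k|x_\eps(\tau) - x(\tau)| = k\eps|z_\eps(\tau)|$, one has $|z_\eps(t)| \le \int_0^t k \sup_{\sigma \le s}|z_\eps(\sigma)|\,ds + 2\sup_\tau|f(\tau,x_\eps(\tau))|$, and the last constant is bounded uniformly in $\eps$ because $x_\eps$ ranges over a fixed compact set; Gronwall then gives $\sup_\eps \|z_\eps\|_\infty < \infty$. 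With that in hand, all the error terms are genuinely $o(1)$ and the argument closes. Everything else is the routine contraction/Gronwall machinery already used in the proof of Proposition \ref{cauchylip}.
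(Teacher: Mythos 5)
Your argument is correct, and it takes a genuinely different route from the paper. The paper's proof establishes the a priori $W^{1,\infty}$ bound on $z_\eps$ essentially as you do (Lipschitz bound on $f$, total variation of $\mu_t-\nu_t$ bounded by $2$, Gronwall), but then it stops there and invokes Ascoli--Arzel\`a to extract a uniformly convergent subsequence $z_{\eps_k}\to y$; passing to the limit in the integrated equation for $z_{\eps_k}$ -- using only continuity of $D_x f$ and the uniform convergence $x_\eps\to x$ -- identifies $y$ with the unique solution $z$ of \pref{lin}, and uniqueness of the cluster point upgrades subsequential to full convergence. You instead compare $z_\eps$ and $z$ directly: the mean-value decomposition $R_\eps z_\eps - Az = R_\eps(z_\eps-z) + (R_\eps - A)z$ together with a second Gronwall estimate closes the argument without any compactness. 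The paper's route is shorter because it never needs to set up the comparison; yours is a bit more work but is quantitative, yielding $\|z_\eps - z\|_\infty = O(\rho_\eps)$ with $\rho_\eps$ controlled by $\|x_\eps-x\|_\infty=O(\eps)$ and the modulus of continuity of $D_x f$ on the relevant compact. Both approaches rest on the same two technical pillars (the uniform $L^\infty$ bound on $z_\eps$ and uniform convergence of the mean-value quotients $R_\eps$ to $A$), and both are complete.
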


\begin{proof}
First, let us remark that by the same arguments used in proposition \pref{cauchylip}, the equation \pref{lin} posseses a unique (Lipschitz) solution $z$. By construction we have $z_\eps(0)=0$ and 
\begin{equation}\label{differencez}
\dot{z_\eps}(t)=\<\frac{1}{\eps}[f(., x+\eps z_\eps)-f(., x))], \nu_t>+\<f(., x_\eps), \mu_t-\nu_t>
\end{equation}
from which, by the Lipschitz assumption on $f$, we easily deduce that $z_\eps$ is bounded in $W^{1, \infty}$ (in particular $\Vert x_\eps-x\Vert_{W^{1,\infty}}=O(\eps)$). By Ascoli-Arzela's theorem, $z_\eps$ then possesses  a cluster point $y$ in $C^0$. Passing to the limit in \pref{differencez} along a convergent subsequence  then easily yields $y=z$ and thus by compactness the whole  family $(z_\eps)_\eps$ converges to $z$ as $\eps\to 0^+$. 
\end{proof}

Under our differentiability assumption, using lemma \ref{linlem} and passing to the limit in \pref{optizero} we then get:
\begin{equation}\label{opti1}
\int_0^1 B\cdot z\,dt + b\cdot z(1) + \int_0^1\<g(t,.),\mu_t-\nu_t>\,dt\ge0.
\end{equation}
where $z$ is related to $\gamma$ and $\eta$ by the linearized equation \pref{lin} and
\begin{equation}\label{Bb}
B(s):=\nabla_x j(s,x(s))\ \forall s\in[0,1],\qquad b:=\nabla h(x(1)).
\end{equation} 
To make condition \pref{opti1} tractable, we shall introduce, as usual in control theory, an adjoint state; to do so we shall need a few notations and preliminaries. Recall that $\gamma\in \Gamma$ is given by the disintegration $\gamma=\nu_t \otimes \Leb^1$ and let $\nu:={\pi_2}_\#\gamma$ be the second marginal of $\gamma$ defined by
\[\int_0^1\varphi(s)\,d\nu(s)
=\int_{[0,1]^2}\varphi(s)\,d\gamma(t,s)
=\int_0^1\<\varphi,\nu_t>\,dt,\qquad\forall\varphi\in C^0([0,1]).\]
Invoking again the disintegration theorem, $\gamma$ admits the disintegration $\gamma=\nu\otimes\nu_s^*$ that is $\nu_s^*$ is a Borel family of probability measures and for every test-function $\varphi\in C^0([0,1]^2)$ one has
\begin{equation}\label{nuetoile}
\int_{[0,1]^2}\varphi\,d\gamma=\int_0^1\<\varphi(., s),\nu_s^*>\,d\nu(s)
=\int_0^1\<\varphi(t,.),\nu_t>\,dt.
\end{equation}
Note that the requirement that $\gamma$ is supported by $T$ implies that $\nu_s^*([s,1])=1$. Since $\gamma$ has $\Leb^1$ as first marginal, \pref{nuetoile} also holds for test functions of the form $(t,s)\mapsto q(t) \varphi(s)$ with $\varphi$ continuous but $q$ only $L^1$.  This enables us, for $q\in L^1$, to define $\<q,\nu_s^*>\nu$ as the finite measure defined by
\[\langle\varphi,\<q,\nu_s^*>\nu\rangle:=\int_0^1q(t)\<\varphi,\nu_t>\,dt
=\int_{[0,1]^2}q(t)\varphi(s)\,d\gamma(t,s)\quad\forall\varphi\in C^0([0,1]).\]
The adjoint of \pref{lin} will then be expressed using the measures $\nu_s^*$ and $\nu$, as usual, it is an equation of backward type and reads as:
\begin{equation}\label{adj}
\dot{q}=B-A^T\<q,\nu_s^*>\nu,\qquad q(1)=-b
\end{equation}
where the continuous functions $B$ and $A$ are defined by \pref{Bb} and \pref{Aa} respectively, $A^T$ denotes the transpose of $A$ and the measure  $\<q, \nu_s^*>\nu$ is defined as above. A  solution of \pref{adj} is then by definition an $L^1([0,1],\R^d)$ function $q$ such that
\begin{equation}\label{intformadj}
q(t)=-b-\int_t^1 B\,ds + \int_{[t,1]}A^T(s)\<q,\nu_s^*>\,d\nu(s)
\qquad\mbox{for a.e. }t\in[0,1].\end{equation}
Such a solution is of course ${\rm{BV}}$ and the weak formulation of \pref{adj} reads as
\begin{equation}\label{weakformadj}
\int_0^1 q\cdot\dot{\varphi}\,dt
=-\int_0^1 B\cdot\varphi\,dt-b\cdot\varphi(1)
+\int_{[0,1]^2}\Big(A(s)\varphi(s)\cdot q(t)\Big)\,d\gamma(t,s)
\end{equation}
for every $\varphi \in C^1([0,1]), \R^d)$ such that, $\varphi(0)=0$. Note that it is easy to see in this case that \pref{weakformadj} also holds for every test-function, $\varphi\in W^{1,\infty}([0,1],\R^d)$ such that $\varphi(0)=0$. The well-posedness of \pref{adj} follows from the result below.

\begin{lem}\label{adjlem}
Let $B\in C^0([0,1],\R^d)$, $A\in C^0([0,1],\R^{d\times d})$ and $b\in\R^d$. Then \pref{adj} admits a unique solution $q\in\rm{BV}$. Moreover if $z$ is the solution of \pref{lin} then
\begin{equation}\label{linadj}
\int_0^1 B\cdot z\,dt + b\cdot z(1)=-\int_0^1 q(t)\<a,\mu_t-\nu_t>\,dt.
\end{equation} 
\end{lem}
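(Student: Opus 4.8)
The plan is to prove Lemma~\ref{adjlem} in two stages: first establish existence and uniqueness of a $\rm{BV}$ solution $q$ to the backward equation \pref{adj} via a fixed-point argument on $L^1$, and then, using the two weak formulations \pref{weakform}–\pref{weakformn} for $z$ and \pref{weakformadj} for $q$, derive the duality identity \pref{linadj} by a suitable integration by parts.

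\textbf{Existence and uniqueness of $q$.} I would reformulate \pref{adj} as the integral fixed-point equation \pref{intformadj}, i.e. $q = \Phi(q)$ where $\Phi(q)(t) := -b - \int_t^1 B\,ds + \int_{[t,1]} A^T(s)\<q,\nu_s^*>\,d\nu(s)$. Since $B$, $A$ are continuous hence bounded and $\nu$ is a probability measure, $\Phi$ maps $L^1([0,1],\R^d)$ (or better $L^\infty$) into itself. To get a contraction one should mimic the weighted-norm trick of proposition~\ref{cauchylip}: because the integral runs over $[t,1]$ and $\nu_s^*$ is supported in $[s,1]$ (nonanticipativity, now ``backward''), the kernel is triangular, so equipping $L^\infty$ with $\|q\|_\lambda := \ess_{t}\, e^{\lambda t}|q(t)|$ for $\lambda$ large (or rather the mirror weight $e^{-\lambda(1-t)}$) makes $\Phi$ a contraction; the key estimate is $\int_{[t,1]} e^{-\lambda(1-s)}\,d\nu(s) \le$ (something) $\,e^{-\lambda(1-t)}/\lambda$ after using that $\nu$ has a density bounded by the triangular structure — more carefully, one uses Fubini to rewrite $\int_{[t,1]}\langle |q|, \nu_s^*\rangle\,d\nu(s)$ in terms of $\gamma$ restricted to $\{\tau \ge t\}$ and the fact that $\gamma$ has first marginal $\Leb^1$. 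The unique fixed point $q$ is then in $L^1$, and from \pref{intformadj} it is a difference of an absolutely continuous function and the function $t\mapsto \int_{[t,1]} A^T\<q,\nu_s^*>\,d\nu(s)$, which is $\rm{BV}$ (indeed of bounded variation equal to the total mass of the vector measure $A^T\<q,\nu_s^*>\nu$); uniqueness in $\rm{BV}$ follows since any $\rm{BV}$ solution of \pref{adj} satisfies \pref{intformadj}.

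\textbf{The duality identity.} With $q$ in hand, I would plug $\varphi := z$ (the solution of \pref{lin}, which is Lipschitz, hence admissible in \pref{weakformadj} after checking $z(0)=0$, which holds) into the weak formulation \pref{weakformadj} of the adjoint equation, and symmetrically plug $\varphi := q$ (which is only $\rm{BV}$, so one works with the weak formulation of \pref{lin} in the form \pref{weakform} tested against a $\rm{BV}$ function — this requires justifying that \pref{weakform}/\pref{lin} holds for $W^{1,\infty}$ or $\rm{BV}$ test functions, done by density/approximation as the paper already notes for \pref{weakformadj}). Writing \pref{lin} in weak form tested against $q$ gives $\int_0^1 \dot q \cdot z$ (in the distributional/Stieltjes sense) $= -\int_0^1 \langle Az,\nu_t\rangle\cdot q\,dt - \int_0^1 \langle a, \mu_t-\nu_t\rangle\cdot q\,dt$ plus boundary terms; writing \pref{adj} tested against $z$ gives the complementary relation with $\int_0^1 \langle A z, \nu_t\rangle \cdot q\,dt = \int_{[0,1]^2} A(s)z(s)\cdot q(t)\,d\gamma(t,s)$, recognizing that $\langle A^T\langle q,\nu_s^*\rangle\nu, z\rangle = \int A(s) z(s)\cdot q(t)\,d\gamma$ by definition \pref{nuetoile} of $\nu_s^*$. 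Adding the two and observing that the $\int A z \cdot q\,d\gamma$ terms cancel and the boundary terms combine to $q(1)\cdot z(1) - q(0)\cdot z(0) = -b\cdot z(1) - 0$, one is left precisely with $\int_0^1 B\cdot z\,dt + b\cdot z(1) = -\int_0^1 q(t)\langle a,\mu_t-\nu_t\rangle\,dt$, which is \pref{linadj}.

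\textbf{Main obstacle.} The delicate point is the integration by parts when $q$ is merely $\rm{BV}$ and $z$ is only Lipschitz: one has to make sense of $\int z \, dq$ (a Riemann–Stieltjes / duality pairing between $\rm{BV}$ and $C^0$) and verify there is no extra jump contribution, i.e. that the ``product rule'' $d(q\cdot z) = z\, dq + q\,\dot z\, dt$ holds with no diffuse-times-jump correction — this is fine because $z$ is continuous (absolutely continuous even), so the only atoms are those of $dq$ and they are integrated against the continuous function $z$. Equivalently, and more cleanly, I would approximate $q$ by smooth $q_n \to q$ in $L^1$ with $\dot q_n$ the mollified measures, run the smooth integration by parts, and pass to the limit using $z_n\to z$ uniformly and the continuity of $f$, $\nabla_x j$, $D_x f$; the interchange of limits in the $d\gamma$ integrals is licit by dominated convergence since everything is uniformly bounded on the compact range of $x$. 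Handling the test-function regularity (extending \pref{weakform} to $W^{1,\infty}$ or $\rm{BV}$ test functions and \pref{weakformadj} to Lipschitz ones) is the only genuinely technical step; the algebra of the cancellation is routine once the pairings are set up with the correct use of the disintegration $\gamma = \nu\otimes\nu_s^*$.
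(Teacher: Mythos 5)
Your overall plan matches the paper's: a weighted fixed-point argument for existence and uniqueness of $q$, then a pairing of the weak formulations of \pref{lin} and \pref{adj} to get \pref{linadj}. Two comments on where your sketch deviates or falls short.

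For the contraction, the paper uses $L^1$ with the norm $\|q\|=\int_0^1 e^{\lambda t}|q(t)|\,dt$ rather than your $L^\infty$ weight, but either works once one has the pointwise estimate $|Tq_1(t)-Tq_2(t)|\le\|A\|_\infty\int_t^1|q_1-q_2|\,d\tau$. Your ``more carefully'' clause correctly identifies the mechanism (rewrite $\int_{[t,1]}\<|q|,\nu_s^*>\,d\nu$ as a $\gamma$-integral, use that $\gamma$ is supported in the triangle and that ${\pi_1}_\#\gamma=\Leb^1$), but your first attempt is wrong: $\nu={\pi_2}_\#\gamma$ need have no density at all, e.g.\ $\nu_t\equiv\delta_0$ gives $\nu=\delta_0$. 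Also, \pref{nuetoile} is only justified for continuous (or $L^1(dt)\times C^0(ds)$) test functions, so substituting the indicator $\mathbf{1}_{[t,1]}(s)$ requires approximating by continuous $\varphi_\eps$ with $\mathbf{1}_{[t,1]}\le\varphi_\eps\le\mathbf{1}_{[t-\eps,1]}$ and passing to the limit, which the paper does explicitly; your Fubini step glosses over this.

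For \pref{linadj} you over-complicate. You propose to test \pref{weakformadj} against $z$ \emph{and} \pref{lin} against $q$, then add; this is what drives you into Stieltjes integrals $\int z\,dq$, mollification of $q$, and extending the weak form of \pref{lin} to BV test functions. None of that is needed. Testing \pref{weakformadj} against $\varphi=z$ (Lipschitz, $z(0)=0$) gives
\[\int_0^1 q\cdot\dot z\,dt=-\int_0^1 B\cdot z\,dt-b\cdot z(1)+\int_{[0,1]^2}(A(s)z(s))\cdot q(t)\,d\gamma(t,s),\]
and since $z$ is Lipschitz you may simply substitute the a.e.\ ODE \pref{lin} for $\dot z$ in the left-hand side: $\int_0^1 q\cdot\dot z\,dt=\int_{[0,1]^2}(A(s)z(s))\cdot q(t)\,d\gamma+\int_0^1 q(t)\cdot\<a,\mu_t-\nu_t>\,dt$. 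The $Az\cdot q\,d\gamma$ terms cancel and \pref{linadj} drops out, with no pairing of $z$ against $dq$ at any point. Your route would also close after the regularity arguments you sketch, but the one-sided substitution is the intended and much shorter argument.
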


\begin{proof}
Take $\lambda>\|A\|_{\infty}$ and equip $L^1((0,1), \R^d)$ with the norm
\[\|q\|:=\int_0^1 e^{\lambda t}|q(t)|\,dt.\]
In view of \pref{intformadj}, it is natural, for every $q\in L^1$ to define $Tq$ by
\[Tq(t):=-b-\int_t^1 B\,ds +\int_{[t,1]}A^T(s)\<q,\nu_s^*>\,d\nu(s)\quad\mbox{for a.e. }t\in[0,1],\]
so that $Tq\in{\rm{BV}}$. Let $q_1$ and $q_2$ be in $L^1$ and $t\in (0,1)$, let $\eps\in (0,t)$ and $\varphi_\eps$ be a continuous function such that $0\le\varphi_\eps\le1$, $\varphi_\eps=1$ on $[t,1]$ and $\varphi_\eps=0$ on $[0,t-\eps]$. We then have:
\[\begin{split}
|Tq_1(t)-Tq_2(t)|&\le\|A\|_{\infty}\int_0^1\varphi_\eps\<|q_1-q_2|,\nu_s^*>\,d\nu\\
&=\|A\|_{\infty}\int_0^1\<\varphi_\eps,\nu_\tau>|q_1(\tau)-q_2(\tau)|\,d\tau\\
&\le\|A\|_{\infty}\int_0^1\nu_\tau([t-\eps,\tau])|q_1(\tau)-q_2(\tau)|\,d\tau\\
&\le\|A\|_{\infty}\int_{t-\eps}^1|q_1-q_2|\,d\tau.
\end{split}\]
Letting $\eps\to0^+$ we get
\begin{equation}\label{ineqforT}
|Tq_1(t)-Tq_2(t)|\le\|A\|_{\infty}\int_t^1|q_1-q_2|\,d\tau.
\end{equation}
Multiplying \pref{ineqforT} by $e^{\lambda t}$, integrating and using Fubini's theorem then yields
\[\begin{split}
\|Tq_1-Tq_2\|&\le\|A\|_{\infty}\int_0^1 e^{\lambda t}\Big(\int_t^1|q_1-q_2|\,d\tau\Big)\,dt\\
&=\|A\|_{\infty}\int_0^1|q_1(\tau)-q_2(\tau)|\Big(\int_0^\tau e^{\lambda t}\,dt\Big)\,d\tau\\
&\le\frac{\|A\|_{\infty}}{\lambda}\|q_1-q_2\|.
\end{split}\]
Since we have chosen $\lambda>\|A\|_{\infty}$, the map $T$ is  a contraction and the existence and uniqueness of a solution to \pref{adj} then follows from Banach-Picard's fixed point theorem.

Let us now establish \pref{linadj}. Using $z$ (which is Lipschitz) as test-function in \pref{weakformadj}, we first get
\[\int_0^1 q\cdot\dot{z}\,dt=-\int_0^1 B\cdot z\,dt-b\cdot z(1)+\int_{[0,1]^2} (A(s) z(s))\cdot q(t)\,d\gamma(t,s)\]
then using \pref{lin} yields
\[\int_0^1 q\cdot\dot{z}\,dt=\int_{[0,1]^2}(A(s)z(s))\cdot q(t)\,d\gamma(t,s)+\int_0^1 q(t)\cdot\<a,\mu_t-\nu_t>\,dt\]
which proves \pref{linadj}.
\end{proof}

\begin{thm}\label{pmpontr}
Under the assumptions of this section, if $\gamma\in \Gamma$, $\gamma=\nu_t\otimes \Leb^1=\nu\otimes \nu_s^*$ solves \pref{optigamma} and $x=x_\gamma$ then one has for $\Leb^1$-a.e. $t$
\begin{equation}\label{maxpct}
\spt(\nu_t)\subset{\rm{argmax}}_{s\in [0,t]}\big\{q(t)\cdot f(s,x(s))-g(t,s)\big\},
\end{equation}
where $q$ is the adjoint variable whose dynamics is given by
\begin{equation}\label{adjoint}
q(1)=-\nabla h(x(1)),\ \dot{q}(s)=\nabla_x j(s,x(s))-D_xf(s,x(s))^T \<q,\nu_s^*>\nu(s).
\end{equation}
\end{thm}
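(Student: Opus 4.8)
The plan is to combine the two ingredients already prepared: the first-order inequality \pref{opti1}, which holds for every competitor $\eta=\mu_t\otimes\Leb^1\in\Gamma$, and the duality identity \pref{linadj} from Lemma \ref{adjlem}, which rewrites the ``$z$-part'' of \pref{opti1} purely in terms of the adjoint state $q$. Substituting \pref{linadj} into \pref{opti1} makes the linearized variable $z$ disappear and yields, for every admissible $\eta=\mu_t\otimes\Leb^1$,
\begin{equation}\label{plan-reduced}
\int_0^1\Big(\<g(t,\cdot),\mu_t-\nu_t>-q(t)\cdot\<a,\mu_t-\nu_t>\Big)\,dt\ge0,
\end{equation}
that is $\int_0^1\<\Phi(t,\cdot),\mu_t-\nu_t>\,dt\ge0$ where $\Phi(t,s):=g(t,s)-q(t)\cdot f(s,x(s))$ (recall $a(s)=f(s,x(s))$). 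Note $\Phi$ is a Carath\'eodory-type integrand: continuous in $s$, and merely $L^1$ (through $q$) in $t$; this is exactly the regularity the measures $\nu_s^*$ and $\nu$ were built to handle.

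The second and main step is to pass from the single scalar inequality \pref{plan-reduced}, valid for all $\eta\in\Gamma$, to the pointwise-in-$t$ support condition \pref{maxpct}. This is a classical ``bang-bang''/measurable-selection argument. Since the constraint defining $\Gamma$ decouples over $t$ (the only coupling, the first-marginal condition ${\pi_1}_\#\gamma=\Leb^1$, is automatically satisfied by any $\mu_t\otimes\Leb^1$, and the support-in-$T$ condition is the pointwise requirement $\spt\mu_t\subset[0,t]$), one can localize: for any measurable $t\mapsto\mu_t$ with $\mu_t$ a probability on $[0,t]$, the minimality of $\gamma$ forces $\int_0^1\<\Phi(t,\cdot),\mu_t>\,dt\ge\int_0^1\<\Phi(t,\cdot),\nu_t>\,dt$. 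I would argue by contradiction: suppose \pref{maxpct} fails on a set $E\subset[0,1]$ of positive measure, i.e. for $t\in E$ there is $s\in[0,t]$ with $-\Phi(t,s)>\max_{\sigma\in[0,t]}\{-\Phi(t,\sigma)\}-$ no; rather $\nu_t$ charges a set where $-\Phi(t,\cdot)$ is not maximal. Then, using a measurable selection of a near-maximizer $\sigma(t)\in\argmax_{s\in[0,t]}(-\Phi(t,s))$ (which exists by continuity of $\Phi(t,\cdot)$ on the compact $[0,t]$ and measurability of $t\mapsto\Phi(t,\cdot)$, via e.g. a Castaing/Filippov-type theorem), define $\mu_t:=\delta_{\sigma(t)}$ on $E$ and $\mu_t:=\nu_t$ off $E$; this $\eta=\mu_t\otimes\Leb^1$ lies in $\Gamma$ and strictly decreases $J$ to first order, contradicting \pref{plan-reduced}. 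Hence for a.e. $t$, $\nu_t$ is supported on the argmax set, which is \pref{maxpct}.

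A cleaner way to package the measurable-selection step, which I would actually write, is to test \pref{plan-reduced} with $\eta$ obtained by replacing $\nu_t$ by $\nu_t$ restricted to a well-chosen measurable set and renormalizing, or simply to invoke the standard fact (disintegration + localization of the inequality $\int_0^1\<\Phi(t,\cdot),\mu_t-\nu_t>dt\ge0$ over all admissible kernels) that this forces $\<\Phi(t,\cdot),\nu_t>=\inf_{s\in[0,t]}\Phi(t,s)$ for a.e. $t$, equivalently $\spt\nu_t\subset\argmin_{s\in[0,t]}\Phi(t,s)=\argmax_{s\in[0,t]}\{q(t)\cdot f(s,x(s))-g(t,s)\}$, which is \pref{maxpct}. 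Finally, \pref{adjoint} is just \pref{adj} written out with $B=\nabla_x j(\cdot,x(\cdot))$, $A=D_xf(\cdot,x(\cdot))$, $b=\nabla h(x(1))$ as in \pref{Aa}--\pref{Bb}, and Lemma \ref{adjlem} guarantees its well-posedness; so no extra work is needed there.

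The main obstacle is the measurable-selection/localization step: one must make sure the integrand $\Phi(t,s)=g(t,s)-q(t)\cdot f(s,x(s))$, which is only $L^1$ in $t$ because $q\in{\rm BV}\subset L^1$, still admits a measurable selection of its argmin in $s$ and that the localized variational inequality indeed yields the pointwise conclusion a.e.; this is where the care taken earlier in defining $\nu_s^*$, $\nu$, and the extension of \pref{nuetoile} to $L^1$ test functions in $t$ pays off, and it is essentially the only nontrivial point. Everything else is bookkeeping: substituting \pref{linadj} into \pref{opti1}, and identifying the coefficients of \pref{adj} with those in \pref{adjoint}.
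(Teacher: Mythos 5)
Your proof is correct and follows the same route as the paper: substitute the duality identity \pref{linadj} into the first-order inequality \pref{opti1} to eliminate $z$, then use that $\mu_t$ is an arbitrary probability on $[0,t]$ to pass to the pointwise support condition \pref{maxpct}. The paper states the last step as immediate, whereas you flesh out the measurable-selection/localization argument; that elaboration is sound but not a different approach.
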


\begin{proof}
Let $\gamma=\nu_t\otimes\Leb^1\in\Gamma$ be optimal for \pref{optigamma}; then for every $\eta=\mu_t\otimes\Leb^1$, it follows from \pref{opti1}, \pref{Bb} and \pref{linadj} that
\[\int_0^1\<q(t)\cdot f(.,x(.))-g(t,.),\nu_t-\mu_t>\,dt\ge0\]
with $q$ defined by \pref{adjoint}. Since in the previous inequality, $\mu_t$ is an arbitrary probability measure supported on $[0,t]$, \pref{maxpct} directly follows.
\end{proof}

\begin{rem}\label{csuff} 
In the case where $f$ is linear in $x$ (that is $f(s,x)=A(s) x$) and $j(t,.)$ and $h$ are convex (so that $J(\gamma)$ is convex in $x_\gamma$, but in general not in $\gamma$), then the following condition is sufficient  for optimality: 
\begin{equation}\label{maxpcsuff}
\spt(\nu_t)\subset{\rm{argmax}}_{s\in [0,t]}\big\{q(t)\cdot (A(s) x_\eta(s))-g(t,s)\big\},
\end{equation}
for every $\eta\in \Gamma$ and a.e. $t\in [0,1]$. Indeed, if $\gamma=\nu_t \otimes \Leb^1$, $x=x_\gamma$,  $q$ is the adjoint variable defined as in theorem \ref{pmpontr} and $\eta=\mu_t\otimes \Leb^1 \in \Gamma$, by the above convexity assumptions and the same manipulations as before (integration by parts and disintegration), we get:
\[\begin{split}
J(\eta)-J(\gamma)&\geq \int_0^1 \nabla_x j(s,x(s))\cdot (x_\eta(s)-x(s))ds + \int_{[0,1]^2} g(t,s) d(\eta-\gamma)(t,s)\\
&= \int_0^1 \<q(t)\cdot Ax_\eta(.)-g(t,.), \nu_t-\mu_t> dt
\end{split}\] 
which proves that \pref{maxpcsuff} is a sufficient optimality condition. Of course, \pref{maxpcsuff} is not necessary in general and is difficult to exploit since it involves every admissible $\eta$. However, we shall give a simple example in section \ref{exampl} where the sufficient condition \pref{maxpcsuff} together with the necessary condition of theorem \ref{pmpontr} actually enables one to compute an optimal control. 
\end{rem}

\begin{rem}\label{moregeneral}
Let us indicate, without giving details, that one may obtain in a similar way as in theorem \ref{pmpontr} optimality conditions for the optimal control of the slightly more general state equation containing a local term:
\[\dot{x}(t)=\eta(t, x(t))+\<f(., x(.)), \nu_t>, \; x(0)=x_0.\]
Indeed, considering the minimization of the cost $J$ (which has the same form as before), one gets, by similar arguments as previously, that any minimizer $\gamma=\nu_t \otimes \Leb^1$ has to satisfy the statement of theorem \ref{pmpontr}, the only modification being that the adjoint equation now reads as
\begin{equation}\label{adjointmoregeneral}
 \dot{q}(s)=\nabla_x j(s,x(s))-D_x \eta(s,x(s))^T q(s)-D_xf(s,x(s))^T \<q,\nu_s^*>\nu(s).
\end{equation}

\end{rem}

\subsection{Solutions with finitely many Dirac masses}

This paragraph is somehow independent from the previous one on optimality conditons. A simple application of Carath\'eodory's theorem, gives the existence of optimal controls involving  only finitely many Dirac masses.

\begin{prop}
Assume that $f$ satisfies \pref{flip} and $f$ and $g$ are continuous, then  for every $\gamma=\nu_t\otimes\Leb^1\in\Gamma$ there exists $\eta=\mu_t\otimes \Leb^1$ such that $\mu_t$ has finite support with at most cardinality $d+2$ such that $x_{\gamma}=x_{\eta}$ and
\[\int_0^1 g(t,s)\,d\nu_t(s)=\int_0^1 g(t,s)\,d\mu_t(s)\quad\mbox{ a.e. }t.\]
In particular, in the optimization problem \pref{optigamma} it is enough to optimize over measures in $\Gamma$ of the form $\nu_t\otimes \Leb^1$ with $\nu_t$ a convex combination of at most $d+2$ ($d+1$ if $g=0$) Dirac masses. 
\end{prop}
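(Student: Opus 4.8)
The plan is to apply Carathéodory's theorem fiberwise, to the vector of values that we wish to preserve. For a fixed $t\in[0,1]$, consider the set
\[
K_t:=\big\{\,(f(s,x_\gamma(s)),\,g(t,s))\in\R^d\times\R\ :\ s\in[0,t]\,\big\},
\]
which is a compact subset of $\R^{d+1}$ since $s\mapsto(f(s,x_\gamma(s)),g(t,s))$ is continuous on the compact interval $[0,t]$ (here we use that $x_\gamma$ is continuous by Proposition \ref{cauchylip}, and that $f$ and $g$ are continuous). The measure $\nu_t$, being a probability measure on $[0,t]$, pushes forward to a probability measure on $K_t$, whose barycenter
\[
\Big(\int_0^t f(s,x_\gamma(s))\,d\nu_t(s),\ \int_0^t g(t,s)\,d\nu_t(s)\Big)
=\big(\dot x_\gamma(t),\ \langle g(t,\cdot),\nu_t\rangle\big)
\]
lies in $\conv(K_t)$. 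By Carathéodory's theorem this barycenter is a convex combination of at most $d+2$ points of $K_t$; that is, there exist $s_1,\dots,s_{d+2}\in[0,t]$ and weights $\lambda_1,\dots,\lambda_{d+2}\ge0$ summing to $1$ with
\[
\sum_{i} \lambda_i\big(f(s_i,x_\gamma(s_i)),\,g(t,s_i)\big)=\big(\dot x_\gamma(t),\ \langle g(t,\cdot),\nu_t\rangle\big).
\]
Setting $\mu_t:=\sum_i\lambda_i\delta_{s_i}$ we obtain a probability measure on $[0,t]$ with $\langle f(\cdot,x_\gamma(\cdot)),\mu_t\rangle=\langle f(\cdot,x_\gamma(\cdot)),\nu_t\rangle$ and $\langle g(t,\cdot),\mu_t\rangle=\langle g(t,\cdot),\nu_t\rangle$. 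When $g\equiv 0$ we only need to preserve the $\R^d$-valued barycenter $\dot x_\gamma(t)$, so $d+1$ points suffice.

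The key remaining point is measurability: one must choose the points $s_i(t)$ and weights $\lambda_i(t)$ in a Borel-measurable fashion in $t$, so that $\mu_t\otimes\Leb^1$ is an admissible element $\eta$ of $\Gamma$. This is the expected main obstacle, and it is handled by a measurable selection argument. Consider the set-valued map $t\mapsto S(t)\subset[0,t]^{d+2}\times\Delta_{d+1}$ (where $\Delta_{d+1}$ is the $(d+1)$-simplex of weights) consisting of all $(s_1,\dots,s_{d+2},\lambda_1,\dots,\lambda_{d+2})$ realizing the required barycentric identity; its graph is Borel in $[0,1]\times([0,1]^{d+2}\times\Delta_{d+1})$ because the constraint
\[
\sum_i\lambda_i\big(f(s_i,x_\gamma(s_i)),\,g(t,s_i)\big)=\Big(\int_0^t f(s,x_\gamma(s))\,d\nu_t(s),\ \int_0^t g(t,s)\,d\nu_t(s)\Big)
\]
involves only the continuous functions $f$, $g$, $x_\gamma$ and the Borel functions $t\mapsto\int f\,d\nu_t$, $t\mapsto\int g(t,\cdot)\,d\nu_t$ (the latter two being Borel by admissibility of $(\nu_t)_t$ and the constraint $s\le t$ being built in since each $s_i\in[0,t]$). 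For each $t$ the fiber $S(t)$ is nonempty by the Carathéodory step above, so the Aumann--von Neumann measurable selection theorem yields Borel functions $t\mapsto(s_i(t))_i$ and $t\mapsto(\lambda_i(t))_i$ with $(s_i(t))_i\in[0,t]^{d+2}$; defining $\mu_t:=\sum_i\lambda_i(t)\delta_{s_i(t)}$ gives a Borel family of probability measures, each supported in $[0,t]$, hence $\eta:=\mu_t\otimes\Leb^1\in\Gamma$.

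It remains to read off the conclusions. By construction $\langle f(\cdot,x_\gamma(\cdot)),\mu_t\rangle=\langle f(\cdot,x_\gamma(\cdot)),\nu_t\rangle$ for a.e. $t$, so $y:=x_\gamma$ satisfies $\dot y(t)=\langle f(\cdot,y(\cdot)),\mu_t\rangle$ with $y(0)=x_0$; by the uniqueness part of Proposition \ref{cauchylip} applied to $\eta$, we conclude $x_\eta=x_\gamma$. Likewise $\int_0^1 g(t,s)\,d\mu_t(s)=\int_0^1 g(t,s)\,d\nu_t(s)$ for a.e. $t$, which is the second claimed identity; integrating in $t$ also gives $\int_T g\,d(\mu_t\otimes\Leb^1)=\int_T g\,d\gamma$. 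Since $x_\eta=x_\gamma$ forces equality of the first two terms of $J$, we get $J(\eta)=J(\gamma)$, so in \eqref{optigamma} one may restrict to measures of the form $\mu_t\otimes\Leb^1$ with each $\mu_t$ a convex combination of at most $d+2$ Dirac masses ($d+1$ if $g\equiv0$). In particular, applying this to any minimizer (which exists by Theorem \ref{exthm} under suitable assumptions, or simply taking an infimizing sequence and noting the reduction respects the value) completes the proof.
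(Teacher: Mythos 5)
Your proof is correct and follows essentially the same route as the paper: apply Carath\'eodory's theorem fiberwise to the compact set $\{(f(s,x_\gamma(s)),g(t,s)):s\in[0,t]\}\subset\R^{d+1}$, then invoke a measurable selection theorem to obtain a Borel family $(\mu_t)$ and conclude by uniqueness for the Cauchy problem. The only difference is that you spell out the measurable-selection step in more detail than the paper, which simply cites Castaing--Valadier.
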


\begin{proof}
Let $\gamma\in\Gamma$, $x:=x_{\gamma}$ and define for every $t\in[0,1]$ the compact set
\[S_{x,t}:=\big\{(f(s,x(s)),g(t,s))\ :\ s\in [0,t]\big\}\subset\R^{d+1}.\]
By construction, for almost every $t$ one has
\[\Big(\dot{x}(t),\int_0^t g(t,s)\,d\nu_t(s)\Big)\in{\rm{co}}(S_{x,t}).\]
It thus follows from Carath\'eodory's theorem (see \cite{ro}) that $\big(\dot{x}(t),\int_0^t g(t,s)\,d\nu_t(s)\big)$ may be expressed as a convex combination of at most $d+2$ points in $S_{x,t}$. Hence there exists a discrete probability $\mu_t$ on $[0, t]$ with at most $d+2$ points in its support such that
\[\dot{x}(t)=\<f(., x(.)),\mu_t>,\qquad
\int_0^1 g(t,s)\,d\nu_t(s)=\int_0^1 g(t,s)\,d\mu_t(s).\]
The fact that the discrete measure $\mu_t$ can be chosen measurable in $t$ follows from standard measurable selection arguments (see for instance \cite{cv}).
\end{proof}

\section{Examples}\label{exampl}

\subsection{Scalar case}

We start by considering the scalar ODE with memory
$$\dot x=\alpha\<x,\nu_t>,\qquad x(0)=1$$
where $\alpha>0$ is a parameter and $\nu_t$ is an admissible control. It is clear that if we want to minimize a cost like
$$J(\gamma)=\int_0^1x_\gamma(t)\,dt$$
the best choice for the control is $\gamma=\nu_t \otimes \Leb^1$ with $\nu_t=\delta_0$ for all $t\in[0,1]$, which gives
$$x(t)=1+\alpha t,\qquad J_{min}=1+\frac{\alpha}{2}.$$
Analogously, for a cost like
$$J(\gamma)=-x_\gamma(1)$$
the best choice is  $\gamma=\nu_t \otimes \Leb^1$ with $\nu_t=\delta_t$ for all $t\in[0,1]$, which gives
$$x(t)=e^{\alpha t},\qquad J_{min}=-e^\alpha.$$
Take now the cost
$$J(\gamma)=a\int_0^1x_\gamma(t)\,dt-bx_\gamma(1)$$
with $a,b>0$ and with no penalization in the use of memory (i.e. $g=0$). The Pontryagin principle of theorem \ref{pmpontr} gives the adjoint equation
\begin{equation}\label{adjex1d}
\dot q(t)=a-\alpha\<q,\nu^*_t>\nu(t),\qquad q(1)=b
\end{equation}
and the necessary condition of optimality
\begin{equation}\label{max1d}
\spt(\nu_t)\subset\argmax_{s\in[0,t]}\big\{\alpha q(t)x_\gamma(s)\big\}
\end{equation}
since $x_\gamma$ is increasing, the latter is equivalent to
$$\nu_t=\left\{\begin{array}{ll}
\delta_t&\hbox{if }q(t)>0\\
\delta_0&\hbox{if }q(t)<0.\\
\end{array}\right.$$
Thanks to remark \ref{csuff} and the fact that $x_\eta$ is nondecreasing for every admissible $\eta$ then the conditions \pref{adjex1d}-\pref{max1d} are in fact \emph{sufficient}. Since $q(1)=b>0$, we necessarily have $\nu_t=\delta_t$ for $t$ close to $1$. Now, it is natural, in view of the previous considerations to look for an optimal control in the form
$$\nu_t=\left\{\begin{array}{ll}
\delta_0 &\hbox{if } t\in (0, t_0) \\
\delta_t &\hbox{if } t\in (t_0,1)\\
\end{array}\right.$$
where $t_0\in [0,1]$ will be determined in such a way that the previous necessary and sufficient optimality conditions hold.  For such a family of Dirac masses, a direct computation yields
\[\nu=t_0 \delta_0 + \Leb^1_{[t_0, 1]}\]
and 
 $$\nu_s^*=\left\{\begin{array}{lll}
 t_0^{-1} \Leb^1_{[0, t_0]} &\hbox{if } s=0 \hbox{ and } t_0\neq 0,\\
 \delta_0 &\hbox{if } s=0 \hbox{ and } t_0= 0,\\
\delta_t &\hbox{if } s\in (t_0,1)\\
\end{array}\right.$$
(note that $\nu_s^*$ is only defined for $\nu$-a.e. $s$). Integrating \pref{adjex1d} on $(t_0, 1]$ then yields
\[q(t)=\frac{a}{\alpha} +\Big(b-\frac{a}{\alpha}\Big) e^{\alpha(1-t)}, \; t\in (t_0,1).\]
There are now two cases: either 
\begin{equation}\label{caslimite}
\frac{b}{a} \geq \frac{e^{\alpha}-1}{\alpha e^{\alpha}}
\end{equation} 
in which case, $q$ remains positive on $(0,1)$ and thus $t_0=0$ and $\nu_t=\delta_t$ for every $t\in [0,1]$ is optimal. Or \pref{caslimite} does not hold; in this case,  set
\[t_0=1-\frac{1}{\alpha} \ln \Big(\frac{a}{a-\alpha b}\Big)\in (0,1).\]
Now let $t\in (0,t_0)$, integrating \pref{adjex1d} between $t$ and $t_0$, we simply get $q(t)=a(t-t_0)$ so that $q$ is negative on $(0,t_0)$ and then $\nu_t=\delta_0$ for $t\in (0, t_0)$ and $\nu_t=\delta_t$ for $t\in (t_0,1)$ is an optimal control.

\subsection{A two-dimensional example}

Let us now consider the two-dimensional linear-quadratic-like problem that consists in minimizing
\[\int_0^1 \frac{1}{2}\left( a(t)x^2(t) +b(t)y^2(t) \right) dt\]
with respect to the admissible control $(\nu_t)$ when the state equations for $x$ and $y$ are  
\[\dot{x}(t)=\alpha \<x, \nu_t>, \; \dot{y}(t)=\beta \<y, \nu_t>\]
together with the initial conditions $(x(0), y(0))=(x_0, y_0)$. The weight functions $a$ and $b$ are continuous  (but not necessarily positive).  We also assume that $\alpha>0$, $\beta>0$ and $x_0>0$, $y_0<0$ which guarantees that $x$ is increasing and positive and $y$ is decreasing and negative.  The adjoint equations read as
\begin{equation}\label{adjex2d}
\dot{q}_1=ax -\alpha \<q_1, \nu_s^*>\nu, \; \dot{q}_2=by -\alpha \<q_2, \nu_s^*>\nu, \; q_1(1)=q_2(1)=0.
\end{equation}
Furthermore, the maximum principle of theorem \ref{pmpontr} gives
\[\spt(\nu_t)\subset{\rm{argmax}}_{s\in [0,t]}\big\{q_1(t) x(s) +q_2(t) y(s)\big\}.\]
Since $x$ is increasing and $y$ is decreasing, we thus deduce that:
\[\begin{split}
q_1(t)>0, \; q_2(t)<0 \Rightarrow \nu_t=\delta_t,\\
q_1(t)<0, \; q_2(t)>0 \Rightarrow \nu_t=\delta_0.
\end{split}\]
Now, if $a(1)>0$ and $b(1)>0$, it is easy to deduce from \pref{adjex2d} and the fact that $\spt(\nu_s^*)\subset [s,1]$ (and that $\nu(\{1\})=0$) that $q_1(t)<0$ and $q_2(t)>0$ for $t<1$ sufficiently close to $1$ and therefore the optimal $\nu$'s have to satisfy $\nu_t=\delta_0$ for $t$ close to $1$.  

\smallskip

If $a$ and $b$ are everywhere positive, then \pref{adjex2d} implies $q_1<0$ and $q_2>0$ on $[0,1)$ so that (not surprisingly) there is only one optimal $\nu$ that is $\nu_t=\delta_0$ for every $t$. Now let us consider the case where $a$ and $b$ may change sign and let us look for conditions that ensure that $q_1(t)>0$ and $q_2(t)<0$ for $t$ close to $0$ so that optimal $\nu$'s satisfy $\nu_t=\delta_t$ for $t$ close to $0$. 

\smallskip

Let us assume further that $\alpha<1$ and $\beta<1$, then we straightly deduce from the state equation the estimates
\begin{equation}\label{estimx}
\Vert x \Vert_{\infty}\leq \frac{x_0}{1-\alpha}, \; \Vert y \Vert_{\infty}\leq \frac{\vert y_0\vert}{1-\beta}
\end{equation} 
using those estimates and the adjoint equation for $q_1$, we also get
\begin{equation}\label{estimq1}
\Vert q_1\Vert_{\infty}\leq \frac{\Vert a \Vert_{\infty} x_0}{(1-\alpha)^2}. 
\end{equation}
We then get
\[\begin{split}
-q_1(0)&=\int_0^1 ax\,dt-\alpha\int_0^1 \<q_1, \nu_s^*> d\nu\\
&\le\int_0^1 a_+x\,dt -\int_0^1 a_-x\,dt +\frac{\alpha\|a\|_{\infty} x_0}{(1-\alpha)^2}\\
&\le x_0\Big(\frac{1}{1-\alpha}\int_0^1 a_+\,dt -\int_0^1 a_-\,dt +\frac{\alpha\|a\|_{\infty}}{(1-\alpha)^2}\Big)
\end{split}\]
so that $q_1(t)<0$ for $t$ close to $0$ as soon as
\[\int_0^1 a_-\,dt > \frac{1}{1-\alpha}\int_0^1 a_+\,dt +\frac{\alpha\|a\|_{\infty}}{(1-\alpha)^2}.\]
A similar condition on $b$ ensures that $q_2(t)>0$ for $t$ close to $0$.

\end{document}